\renewenvironment{proof}{{\noindent \sc Proof:}}{\begin{flushright}$\blacksquare$\end{flushright}}
\newtheorem{teo}{Theorem}
\newtheorem{defi}{Definition}
\newtheorem{q}{Question}
\newtheorem*{claim}{Claim}
\theoremstyle{definition}
\newtheorem{ex}{Example}
\newtheorem*{notacao}{Notation}
\DeclareMathOperator{\spn}{span}
\DeclareMathOperator{\orb}{orb}
\newcommand{\norm}[1]{\left\lVert#1\right\rVert}
\newcommand{\abs}[1]{\left\lvert#1\right\rvert}
\newcommand{\eqdef}{\mathrel{\mathop:}=}
\newcommand{\restr}[2]{{\left.\kern-\nulldelimiterspace  #1\vphantom{\big|}\right|_{#2}}}
\title{Some Results on Subspace-Hypercyclic Operators}
\author{A. Augusto, L. Pellegrini}
\thanks{The research of the first author was supported by CNPQ, grant 142035/2018-1.}
\address{Departamento de Matemática, Instituto de Matemática e Estatística, Universidade de São Paulo}
\email{andreqa@ime.usp.br; leonardo@ime.usp.br}
\begin{document}
\begin{abstract}
A bounded linear operator $T$ on a Banach space $X$ is called subspace-hypercyclic if there is a subspace $M \subsetneq X$ and a vector $x \in X$ such that  $\orb{(x,T)} \cap M$ is dense in $M$. We show that every Banach space supports subspace-hypercyclic operators and provide a new criteira for subspace-hypercyclic operators, generalizing a previous result from Le \cite{le}.
\end{abstract}
\maketitle

\section*{Introduction}

A bounded linear operator $T$ on a separable Banach space $X$ is {\it hypercyclic} if there exists a vector $x \in X$ such that $\orb{(x,T)} \eqdef \{T^nx \, : \, n \geq 0\}$ is dense in $X$. Such vector $x$ is called a {\it hypercyclic vector} for $T$. Since Rolewicz \cite{rolewicz} constructed the first example of a hypercyclic operator on a Banach space, these operators have been massively studied. Amongst the results obtained since then, we highlight the Ansari-Bernal Theorem (every separable space admits a hypercyclic operator) and the prominent Hypercyclicity Criterion (a sufficient condition for hypercyclicity).  More information about these results and this topic can be found in \cite{LC}.

Recently, Madore and Martínez-Avendaño introduced in \cite{MMA} the concept of {\it subspace-hypercyclicity}: a bounded linear operator $T$ is subspace-hypercyclic if there is a subspace $M \subsetneq X$ and a vector $x \in X$ such that $\orb{(x,T)} \cap M$ is dense in $M$. In this case, we also say that $T$ is $M$-hypercyclic and that $x$ is a $M$-hypercyclic vector for $T$. Since the Madore and Martínez-Avendaño paper, this concept has been actively explored. One remarkable result was obtained in \cite{BKK}: there the authors prove that every hypercyclic operator is subspace-hypercyclic. 

It follows from the Ansari-Bernal theorem that every separable space admits a subspace-hypercyclic operator. However, the existence of subspace-hypercyclic operators on nonseparable Banach spaces hasn't been adressed until now. In this paper, we show that every Banach space (in particular, nonseparable spaces) admits a subspace-hypercyclic operator. Using this result, we will also show that, given an infinite-dimensional separable closed subspace $M$ in a Banach space $X$, there exists a bounded linear operator $T_M$ on $X$ such that $T_M$ is $M$-hypercyclic.

On Section 2, using an example from \cite{JMAP} as a blueprint, we obtain another criteria for subspace-hypercyclicity. This new criteria generalizes a previous result from Le \cite{le}.

\begin{notacao}
Throughout this paper $X$ will denote an infinite dimensional Banach space and $\mathcal{B}(X)$ the algebra of bounded linear operators on $X$.
\end{notacao}

\section{Existence of Subspace-Hypercyclic Operators}

Let us first recall some established definitions:

\begin{defi} Let $X, Y$ be Banach spaces and $T \in \mathcal{B}(X), S \in \mathcal{B}(Y)$. We say that $T$ is {\bf quasiconjugated} to $S$ (via $\phi$) if there exists a continuous map $\phi : X \to Y$ with dense range such that $\phi\circ T = S\circ\phi$.
\end{defi}

\begin{defi} An operator $T \in \mathcal{B}(X)$ is called {\bf weakly mixing} if the operator $T \oplus T$ is hypercylic on $X \oplus X$.
\end{defi}

It is an easy exercise to show that if $T$ is hypercyclic and quasiconjugated to $S$, then $S$ is hypercyclic. The same goes for the weakly-mixing property (and as well some other properties). Also, a well-known result from Bès and Peris \cite{equiv} states that the weakly-mixing property is equivalent to the Hypercyclicity Criterion.

With these definitions and aware of the results stated in the last paragraph, we may now prove the main theorem of this section:

\begin{teo} \label{teoex} Every infinite dimensional Banach space admits a subspace-hypercyclic operator.
\end{teo}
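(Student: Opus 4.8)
The plan is to split the argument according to whether $X$ is separable. If $X$ is separable, the Ansari--Bernal theorem furnishes a hypercyclic operator $T$ on $X$, and the result of \cite{BKK} that every hypercyclic operator is subspace-hypercyclic immediately produces a proper closed subspace $M$ witnessing that $T$ is $M$-hypercyclic. All the work is therefore in the nonseparable case, and here the guiding idea is to manufacture an operator on $X$ that leaves a separable, infinite-dimensional, \emph{proper} closed subspace $M$ invariant and restricts to a hypercyclic operator there. Indeed, if $T(M)\subseteq M$ and $S\eqdef\restr{T}{M}$ is hypercyclic on $M$ with hypercyclic vector $u\in M$, then $\orb(u,T)=\orb(u,S)\subseteq M$ is already dense in $M$, so $\orb(u,T)\cap M=\orb(u,T)$ is dense in $M$ and $T$ is $M$-hypercyclic.

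To build such a $T$ I would first fix the subspace. Since $X$ is infinite-dimensional, Mazur's theorem provides a normalized basic sequence $(e_n)_{n\ge 1}$; put $M\eqdef\seg{\spn}\{e_n:n\ge 1\}$, which is separable and infinite-dimensional, and is automatically proper when $X$ is nonseparable (a nonseparable space cannot equal a separable subspace). Assuming $M$ is complemented, write $X=M\oplus N$ with bounded projection $P$ onto $M$ along $N$; since $M$ is separable the Ansari--Bernal theorem gives a hypercyclic operator $S\in\mathcal{B}(M)$ (one may even take $S$ weakly-mixing). Setting $T\eqdef SP$ one gets a bounded operator on $X$ with $\restr{T}{M}=S$ and $T(N)=\{0\}$, so $M$ is invariant and $N\neq\{0\}$ again forces $M\subsetneq X$. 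By the reduction of the previous paragraph, $T$ is $M$-hypercyclic, and this settles the nonseparable case under the complementation assumption.

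The main obstacle is exactly that complementation hypothesis: extending a hypercyclic $S$ from $M$ to a bounded operator on all of $X$ forces $M$ to be the range of a bounded projection, and in an arbitrary nonseparable $X$ a separable infinite-dimensional subspace need not be complemented (for instance, a hereditarily indecomposable space has no infinite-dimensional separable complemented subspace at all). Hence the heart of the matter is to produce, inside any infinite-dimensional $X$, a separable infinite-dimensional subspace carrying the desired dynamics together with a bounded projection onto it --- for example by choosing the basic sequence $(e_n)$ and its coordinate functionals, extended to $X$ via Hahn--Banach, so that the associated series defines a bounded projection. When no such complemented subspace is available one cannot keep $M$ invariant, and one must instead construct $T$ so that the orbit repeatedly returns to a non-invariant $M$ densely; it is here that the weakly-mixing formulation of the Hypercyclicity Criterion \cite{equiv} and a shift-type model built on $(e_n)$, in the spirit of \cite{JMAP}, are the natural tools. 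I expect the construction of a complemented (or, failing that, a suitably recurrent) separable subspace to be the only genuinely delicate point; the verification that the resulting $T$ is $M$-hypercyclic is then routine.
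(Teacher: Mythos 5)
There is a genuine gap in the nonseparable case, and it sits exactly where you flag it. Your separable reduction (Ansari--Bernal plus \cite{BKK}) and your overall strategy --- pick $M$ via Mazur, make $M$ invariant, make $\restr{T}{M}$ hypercyclic --- coincide with the paper's. But your construction $T = SP$ requires $M$ to be complemented, and your proposed repair does not work: extending the coordinate functionals $x_n^*$ of an Ovsepian--Pe{\l}czy\'nski system to $X$ by Hahn--Banach does \emph{not} make $Px \eqdef \sum_{n} x_n^*(x)x_n$ a bounded projection, because the series need not converge at all. Uniform boundedness of the $x_n^*$ gives no control on partial sums; a fundamental, total, bounded biorthogonal system is not a Schauder basis, so even for $x \in M$ the expansion $\sum_n x_n^*(x)x_n$ can diverge. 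Your fallback suggestion --- constructing $T$ whose orbit returns densely to a non-invariant $M$ --- points away from the actual resolution: no such recurrence argument is needed.

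The paper's way out (following Bonet--Peris \cite{bonet}) is to abandon projections entirely and take a small perturbation of the identity: $Tx = x + \sum_{n\ge 1} 2^{-n}x_{n+1}^*(x)x_n$. The summable weights $2^{-n}$ make the series absolutely convergent, so $T$ is bounded on all of $X$ with no complementation hypothesis; the perturbation maps $X$ into $\overline{\spn}\{x_n\} = M$ and the identity summand preserves $M$, so $T(M)\subseteq M$ automatically. Hypercyclicity of $\restr{T}{M}$ is then obtained by quasiconjugacy: the map $\phi:\ell_1\to M$, $\phi((\alpha_n)) = \sum_n \alpha_n x_n$, is bounded with dense range (by fundamentality of the system) and intertwines $\restr{T}{M}$ with the mixing perturbation of the identity $S((\alpha_n)) = (\alpha_n + 2^{-n}\alpha_{n+1})$ on $\ell_1$, which is hypercyclic. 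Your instinct toward a ``shift-type model built on $(e_n)$'' was in the right direction --- the intertwined operator $S$ is exactly such a model --- but the key idea you are missing is that weighting the shift by $2^{-n}$ replaces the projection you could not build.
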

\begin{proof} The argument here is almost the same as the one used by Bonet and Peris in \cite{bonet}. As discussed in the introduction, we may only consider $X$ nonseparable.

Using a widely known result from Mazur about basic sequences in Banach spaces, let $M \subsetneq X$ be an infinite-dimensional separable closed subspace. Since $M$ is closed, we can look at $M$ as a Banach space itself. Hence, using a famous theorem from Ovsepian and Pełczyński \cite{ovsp}, we obtain sequences $\{x_n\}_{n = 1}^{\infty} \subseteq M$ and $\{x_n^*\}_{n = 1}^\infty \subseteq M^*$ such that:
\begin{enumerate}[(i)]
\item $x^*_i(x_j) = \delta_{ij}$
\item $\spn\{x_n \, : \, n \geq 1 \}$ is dense in $M$.
\item if $x \in M$ is such that $x^*_n(x) = 0, n \geq 1$, then $x = 0$.
\item $\norm{x_n} = 1, n \geq 1$ and $\sup_{n \geq 1}\norm{x^*_n} = C < \infty$.
\end{enumerate}

Using the Hahn-Banach theorem, we can extend each $x_n^*$ to $X$. To simplify what comes next, each extension will also be denoted as $x_n^* \in X^*$. Note that $\restr{x_n^*}{M}$ still satisfy conditions $(i), (iii)$ and $(iv)$ above.

Define $T \, : \, X \to X$ as $$Tx = x + \sum_{n = 1}^{\infty} 2^{-n}x^*_{n+1}(x)x_n$$  It is clear that $T$ is linear. Using the conditions above, we have that $T$ is bounded. Surely, by the Hahn-Banach theorem, we have that $\norm{x^*_{n+1}} = \norm{\restr{x_{n+1}^*}{M}} \leq C < \infty$. Then, if $x \in B_X$, we have
\begin{align*}
\norm{\sum_{n = 1}^{\infty} 2^{-n}x^*_{n+1}(x)x_n} & \leq \sum_{n = 1}^{\infty}\norm{2^{-n}x^*_{n+1}(x)x_n} =  \sum_{n = 1}^{\infty} 2^{-n}\abs{x^*_{n+1}(x)}\norm{x_n}  \\
& \leq \sum_{n = 1}^{\infty} 2^{-n}\norm{x^*_{n+1}}\norm{x} \leq \sum_{n = 1}^{\infty} 2^{-n}C < \infty
\end{align*} since, by the item $(iv)$, $\norm{x_n} = 1$. Hence $T$ is bounded.

Now we show that $T$ is $M$-invariant. Let $m \in M$. As $\spn\{x_n \, : \, n \geq 1 \}$ is dense in $M$, there is a sequence $(y_n)_{n \geq 1} \subseteq \spn\{x_n \, : \, n \geq 1 \} \subseteq M$ such that $y_n \to m$. Fix $n \geq 1$. Since $y_n = \sum_{j = 1}^{k} \lambda_jx_{n_j}$, we have:
\begin{align*}
\sum_{n = 1}^{\infty} 2^{-n}x^*_{n+1}(y_n)x_n & = \sum_{n = 1}^{\infty} 2^{-n}x^*_{n+1}\left(\sum_{j = 1}^{k} \lambda_jx_{n_j}\right)x_n = \sum_{n = 1}^{\infty}\sum_{j = 1}^{k} 2^{-n} \lambda_jx_{n+1}^*(x_{n_j})x_n \\
& = \sum_{j = 1}^{k} 2^{-(n_j - 1)} \lambda_jx_{n_j - 1}
\end{align*}

It is clear that  $\sum_{j = 1}^{k} 2^{-(n_j - 1)} \lambda_jx_{n_j - 1} \in \spn\{x_n \, : \, n \geq 1 \} \subseteq M$. Since $Ty_n =  y_n + \sum_{n = 1}^{\infty} 2^{-n}x^*_{n+1}(y_n)x_n$ and $y_n \in M$, this shows that $Ty_n \in M$. Now, as $T$ is continuous and $M$ is closed, we have that $T(m) \in M$, as desired.

Hence the operator $\restr{T}{M} \, : \, M \to M$ is well-defined. Let $S \, : \, \ell_1 \to \ell_1$ be $$S((\alpha_n)_{n \geq 1}) = \left(\alpha_1 + \frac{1}{2}\alpha_2, \alpha_2 + \frac{1}{2^2}\alpha_3, \alpha_3 + \frac{1}{2^3}\alpha_4, \ldots \right)$$

Note that $S$ is a mixing pertubation of the identity, hence hypercyclic.\footnote{See \cite[Corollary 8.3]{LC}.} Define $\phi \, : \, \ell_1 \to M$ as $$\phi((\alpha_n)_{n \geq 1}) = \sum_{n = 1}^{\infty} \alpha_nx_n$$

It is easy to see that $\phi$ is bounded and, looking back at the item $(iv)$ above, have dense range. Now, we have that:
\begin{align*}
\restr{T}{M}\circ\phi((\alpha_n)_{n \geq 1}) & = \restr{T}{M}\left(\sum_{n = 1}^{\infty} \alpha_nx_n\right) = \sum_{n = 1}^{\infty} \alpha_n\restr{T}{M}(x_n) =  \alpha_1x_1 + \sum_{n = 2}^{\infty} \alpha_n(x_n + 2^{-(n-1)}x_{n-1}) \\
& = \alpha_1x_1 + \sum_{n = 2}^{\infty}(\alpha_nx_n + \alpha_n2^{-(n-1)}x_n) =  \alpha_1x_1 + \sum_{n = 2}^{\infty} \alpha_nx_n +  \sum_{n = 2}^{\infty} \alpha_n2^{-(n-1)}x_{n-1} \\ 
& = \sum_{n = 1}^{\infty} \alpha_nx_n +  \sum_{n = 1}^{\infty} \alpha_{n+1}2^{-n}x_{n} =  \phi((\alpha_n)_{n \geq 1}) + \phi((2^{-n}\alpha_{n+1})_{n \geq 1})  \\
& = \phi((\alpha_n + 2^{-n}\alpha_{n+1})_{n \geq 1}) = \phi\circ S((\alpha_n)_{n \geq 1})
\end{align*}

Hence, $\restr{T}{M} \circ \phi = \phi \circ S$. Therefore, $S$ is quasiconjugate to $\restr{T}{M}$. Since $S$ is hypercyclic, then $\restr{T}{M}$ is hypercyclic. This shows that $T$ is $M$-hypercyclic.
\end{proof} 

Notice that if we were given the closed and separable subspace $M$ beforehand, we don't need to use Mazur's theorem at all - we can use the Ovsepian-Pełczyński theorem directly on that subspace $M$. With that in mind, we obtain an alternative version of our main theorem:

\begin{teo} Let $M \subsetneq X$ be an infinite-dimensional separable closed subspace. Then there is a bounded linear operator $T$ that is $M$-hypercyclic.
\end{teo}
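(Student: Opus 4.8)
The plan is to reproduce the construction from the proof of Theorem~\ref{teoex} almost verbatim, the key observation being that Mazur's theorem was invoked there \emph{only} to manufacture an infinite-dimensional separable closed subspace; since such an $M$ is now handed to us, that step is simply omitted. So I would first regard $M$ as a Banach space in its own right (it is closed) and apply the Ovsepian--Pełczyński theorem directly to $M$, obtaining sequences $\{x_n\}_{n \geq 1} \subseteq M$ and $\{x_n^*\}_{n \geq 1} \subseteq M^*$ satisfying conditions $(i)$--$(iv)$. This is legitimate precisely because $M$ is assumed separable and infinite-dimensional, which are exactly the hypotheses of that theorem.

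Next I would extend each $x_n^*$ to a functional on $X$ by Hahn--Banach (preserving the norm bound $C$) and define $T \colon X \to X$ by $Tx = x + \sum_{n=1}^{\infty} 2^{-n} x_{n+1}^*(x) x_n$. The boundedness estimate is identical to the one already carried out, and the $M$-invariance argument goes through unchanged: approximate $m \in M$ by finite linear combinations of the $x_n$, check directly that $T$ sends each such combination back into $\spn\{x_n : n \geq 1\} \subseteq M$, and then pass to the limit using continuity of $T$ together with closedness of $M$. This yields a well-defined operator $\restr{T}{M} \colon M \to M$.

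Finally, I would introduce the same weighted perturbation of the identity $S \colon \ell_1 \to \ell_1$ and the map $\phi \colon \ell_1 \to M$ given by $\phi((\alpha_n)_{n \geq 1}) = \sum_{n=1}^{\infty} \alpha_n x_n$, which is bounded and has dense range by $(ii)$ and $(iv)$. The intertwining computation establishing $\restr{T}{M} \circ \phi = \phi \circ S$ is exactly the one performed in the proof of Theorem~\ref{teoex}, so $S$ is quasiconjugate to $\restr{T}{M}$. Since $S$ is a mixing perturbation of the identity and hence hypercyclic, $\restr{T}{M}$ inherits hypercyclicity; because any hypercyclic vector for $\restr{T}{M}$ lies in $M$ and has its whole $T$-orbit inside $M$, this is precisely the assertion that $T$ is $M$-hypercyclic.

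Since the argument is a transcription of the previous one with the subspace now prescribed rather than constructed, I do not anticipate a genuine obstacle. The only point requiring any care is verifying that the prescribed $M$ meets the hypotheses of Ovsepian--Pełczyński, which is immediate; the substantive content is really the remark preceding the statement, namely that a given $M$ can stand in for the $M$ that Mazur's theorem had supplied.
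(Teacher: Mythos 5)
Your proposal is correct and is exactly the argument the paper intends: the paper itself derives this theorem by observing that Mazur's theorem served only to produce an infinite-dimensional separable closed subspace, so when $M$ is prescribed one applies the Ovsepian--Pe{\l}czy\'nski theorem directly to $M$ and repeats the construction of Theorem~\ref{teoex} verbatim. Your closing remark that a hypercyclic vector for $\restr{T}{M}$ witnesses $M$-hypercyclicity of $T$ is also the right (and complete) justification of the final step.
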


In the case that $X$ is separable, it isn't clear if the operator constructed in the Theorem \ref{teoex} (that is obviously the same obtained in the theorem above) is hypercyclic. As the next theorem will show, if $X$ is separable, we have a better claim than the one provided by the last theorem:

\begin{teo} \label{newteosep} Let $X$ be separable and $M \neq X$ an infinite-dimensional closed subspace. Then there exists an invertible operator $T \in \mathcal{B}(X)$ such that $T$ is $M$-hypercyclic and it satisfies the Hypercyclicity Criterion.
\end{teo}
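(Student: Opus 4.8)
The plan is to build $T$ as a small perturbation of the identity, $T = I + K$, so that invertibility is automatic from a Neumann series as soon as $\norm{K} < 1$, while reusing the ``mixing perturbation of the identity'' engine of Theorem \ref{teoex} on two complementary pieces of $X$ simultaneously: on $M$ itself (to obtain $M$-hypercyclicity) and on a complement or quotient (to force hypercyclicity of $T$ on all of $X$). First I would apply the Ovsepian--Pełczyński theorem \cite{ovsp} to the separable space $M$, exactly as in Theorem \ref{teoex}, obtaining a fundamental and total biorthogonal system $\{m_n, m_n^*\}$ with $\norm{m_n}=1$ and $\sup_n \norm{m_n^*} < \infty$; this yields a weighted backward shift $B_M$ with $m_{n+1} \mapsto a_n m_n$ such that $I + B_M$ is a mixing perturbation of the identity on $M$ and hence, by the quasiconjugacy argument of Theorem \ref{teoex} together with \cite[Corollary 8.3]{LC}, hypercyclic. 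Keeping $M$ invariant and taking a hypercyclic vector $x \in M$ for $\restr{T}{M} = I + B_M$ then gives $\orb{(x,T)} \cap M = \orb{(x,\restr{T}{M})}$ dense in $M$, i.e. $M$-hypercyclicity.

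To make $T$ hypercyclic on $X$ as well, I would run the same construction on a second chain of vectors filling out the rest of $X$. Since $X$ is separable, so is $X/M$; applying Ovsepian--Pełczyński to $X/M$ and lifting produces vectors $u_n \in X$ and functionals $u_n^* \in X^*$ vanishing on $M$ whose images form a fundamental total system in $X/M$. Setting $K = B_M + B_N$, where $B_N$ is the weighted backward shift $u_{n+1} \mapsto b_n u_n$ with small weights, one checks that (a) $\norm{K} < 1$, so $T = I + K$ is invertible; (b) $B_M$ maps into $M$ while $B_N$ vanishes on $M$, so $M$ is $T$-invariant and $\restr{T}{M} = I + B_M$; and (c) the operator induced on $X/M$ is again a mixing perturbation of the identity. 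The cleanest case is when $M$ is complemented with an infinite-dimensional complement $N$: choosing the $u_n$ inside $N$ and the extensions $m_n^*$ to vanish on $N$ makes $T = (I + B_M) \oplus (I + B_N)$ a genuine direct sum of two mixing operators, hence itself mixing. Then $T$ is weakly mixing and satisfies the Hypercyclicity Criterion, is invertible, and is $M$-hypercyclic, as desired.

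The main obstacle is upgrading this block structure to weak mixing of $T$ on all of $X$ in general. When $M$ is not complemented the two shifts do not split as a direct sum: the cross terms $m_k^*(u_n)$ need not vanish, so $T u_n$ acquires an extra summand lying in $M$; I would deal with this by verifying the Hypercyclicity Criterion for $T$ directly on the dense span of the combined system $\{m_n\} \cup \{u_n\}$ (which is fundamental in $X$, since it contains $M$ and projects onto a dense subset of $X/M$), absorbing the cross terms into the invariant subspace $M$ where the dynamics is already under control. A genuinely separate and harder difficulty is the finite-codimensional case: if $\codim M < \infty$ then $X/M$ is a nonzero finite-dimensional space, and since powers of a hypercyclic operator are hypercyclic (Ansari), any $T^p$ leaving $M$ invariant would induce a hypercyclic operator on $X/M$, which is impossible. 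Hence for finite codimension $M$ cannot be kept invariant under any power of $T$, and one must instead arrange the orbit to return to $M$ densely \emph{without} invariance. This is where I expect the real work to lie, and I would isolate it as a separate case (or, if the intended statement permits, restrict to infinite codimension).
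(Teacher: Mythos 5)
Your strategy cannot be completed as stated, and you have in fact put your finger on the fatal obstruction yourself: the theorem allows $M$ of finite codimension (e.g.\ a closed hyperplane), and any operator satisfying the Hypercyclicity Criterion is hypercyclic on $X$, so it cannot leave a proper closed finite-codimensional subspace invariant --- the induced operator on the nonzero finite-dimensional quotient would be hypercyclic, which is impossible. Since your whole mechanism produces $M$-hypercyclicity by making $M$ invariant and $\restr{T}{M}$ hypercyclic, it provably fails in that case, and deferring it as a ``separate case'' (or restricting to infinite codimension) leaves part of the stated theorem unproved. Moreover, even in infinite codimension your argument is only complete when $M$ is complemented: in a general separable Banach space closed subspaces need not be complemented, and your treatment of the cross terms $m_k^*(u_n)$ --- verifying the Hypercyclicity Criterion ``directly on the dense span, absorbing the cross terms'' --- is a sketch, not a proof. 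The resulting operator is upper-triangular with respect to the pair $(M, X/M)$, and weak mixing of such an extension does not follow from weak mixing of the two diagonal blocks without genuine additional work.

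The paper's proof sidesteps invariance entirely, which is exactly what makes the finite-codimensional case painless. It takes an invertible $S \in \mathcal{B}(X)$ satisfying the Hypercyclicity Criterion (Ansari--Bernal), invokes Subrahmonian Moothathu's theorem \cite[Theorem 5]{TKSM} to obtain an invertible $R$ such that $S$ is $R(M)$-hypercyclic with some $R(M)$-hypercyclic vector $z$, and sets $T \eqdef R^{-1}\circ S\circ R$ and $x \eqdef R^{-1}(z)$. Then $\orb{(x,T)} = R^{-1}(\orb{(z,S)})$, so $\orb{(x,T)} \cap M = R^{-1}(\orb{(z,S)} \cap R(M))$ is dense in $M$; $T$ is invertible as a conjugate of an invertible operator; and $T$ is quasiconjugated to $S$ via $R^{-1}$, hence weakly mixing and therefore satisfies the Criterion by B\`es--Peris \cite{equiv}. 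The essential idea your proposal is missing is a mechanism for making the orbit return to $M$ densely \emph{without} $M$ being invariant; that is precisely the content of Moothathu's result, and without it (or a substitute) your construction cannot reach the finite-codimensional, nor cleanly the non-complemented, instances of the theorem.
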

\begin{proof} By the Ansari-Bernal theorem, there exists an invertible operator $S \in \mathcal{B}(X)$ such that $S$ satisfies the Hypercyclicity Criterion. Hence, by a theorem from Subrahmonian Moothatu \cite[Theorem 5]{TKSM}, there exists an invertible operator $R \in \mathcal{B}(X)$ such that $S$ is $R(M)$-hypercyclic. Let $z$ be a $R(M)$-hypercyclic vector for $S$.

Consider now $x \eqdef R^{-1}(z)$. As $R^{-1}$ is continuous, we have $R^{-1}(\orb{(z,S)} \cap R(M)) = R^{-1}(\orb{(z,S)}) \cap M$ dense in $M$. Now, let $T \eqdef R^{-1}\circ S \circ R$. We have \begin{align*}
R^{-1}(\orb{(z,S)}) & = R^{-1}(\{z, Sz, S^2z, \ldots \}) = R^{-1}(\{Rx, SRx, S^2Rx, \ldots \}) = \\
& = \{x, R^{-1}SRx, R^{-1}S^2Rx, \ldots \}) = \orb{(x,T)}
\end{align*}

Hence $T$ is $M$-hipercyclic. By the definition of $T$, it is clear that $T$ is invertible. Also, since $T\circ R^{-1} = R^{-1}\circ S$ and $R^{-1}$ is continuous with dense range, it follows that $S$ is quasiconjugated to $T$. As $S$ is weakly mixing, so is $T$. Therefore $T$ satisfies the Hypercyclic Criterion.
\end{proof}

A well-known result from Grivaux \cite[Lemma 2.1]{grivaux} states that two countable dense sets of linearly independent vectors are linearly isomorphic. An immediate consequence of this result is that two hypercyclic operators have isomorphic orbits. Now we may ask if something similar is valid for subspace-hypercyclic operators: given that an operator $T$ is subspace-hypercyclic for both $M, N \subseteq X$, is there an invertible operator $A \, : \, X \to X$ such that $A(M) = N$? The answer to this question is no, as the next example shows:

\begin{ex} Let $X, Y$ be infinite dimensional separable Banach spaces that are non-isomorphic. Define $Z \eqdef X \oplus Y$, $M \eqdef X \oplus \{0\}$ and $N \eqdef \{0\} \oplus Y$. It is clear that $Z$ is a separable Banach space and $M, N \subseteq Z$ are closed subspaces. Using Theorem \ref{newteosep}, we find $T \in \mathcal{B}(Z)$ such that $T$ is $M$-hypercyclic. By a theorem from Subrahmonian Moothatu\footnote{The same one that we used in Theorem \ref{newteosep}.} \cite[Theorem 5]{TKSM}, there exists an invertible operator $R \in \mathcal{B}(Z)$ such that $T$ is $R(N)$-hypercyclic.

If the previous question had a positive answer, we would be able to find $A \in \mathcal{B}(Z)$ such that $A(R(N)) = M$. Hence, $(A \circ R)(N) = M$. Since $A \circ R$ is a composition of invertible operators, $A \circ R$ is invertible itself. Therefore, $N$ and $M$ are isomorphic - which entail that $X$ and $Y$ are isomorphic, a contradiction with the choice of both.

\vspace{1cm}
\end{ex}

On the other hand, we now may ask the following question:

\begin{q} If $T, S \in \mathcal{B}(X)$ are hypercyclic operators, does exist a subspace $M$ such that $T$ and $S$ are both $M$-hypercyclic?
\end{q}

The next theorem provides a partial answer to that question:

\begin{teo} Let $T, S$ be hypercyclic operators on a Banach space $X$. Then there is a subspace $M \subsetneq X$ and an invertible operator $A \, : \, X \to X$ such that $T$ is $M$-hypercyclic and $S$ is $A(M)$-hypercyclic.
\end{teo}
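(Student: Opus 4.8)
The plan is to read off the subspace $M$ from $T$ alone and then use Grivaux's lemma to transplant a dense slice of $M$ onto a full orbit of $S$, instead of trying to match the two dynamical systems vector by vector. Note first that, being hypercyclic, both $T$ and $S$ force $X$ to be separable. Since $T$ is hypercyclic, the theorem of \cite{BKK} tells us that $T$ is $M$-hypercyclic for some infinite-dimensional proper closed subspace $M \subsetneq X$; fix an $M$-hypercyclic vector $u$, so that $\orb{(u,T)} \cap M$ is dense in $M$. This already secures the first half of the statement, and $T$ will not be used again: it remains to manufacture an invertible $A$ making $S$ into an $A(M)$-hypercyclic operator.

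The point is that, for the $S$-side, only the linear-topological shape of $M$ matters, not how $T$ populates it. Choose a countable dense linearly independent set $\Gamma \subseteq M$ (every separable infinite-dimensional Banach space has one) and enlarge it to a countable dense linearly independent set $\tilde\Gamma \supseteq \Gamma$ in all of $X$. The enlargement is routine: starting from a fixed countable dense subset of $X$, approximate its members to within $1/j$ by small perturbations chosen to avoid the span of the vectors selected so far, which is a countable-dimensional---hence proper, hence nowhere dense---subspace of $X$; this keeps the set linearly independent while making it dense in $X$. On the $S$-side, fix any hypercyclic vector $v$ for $S$, so that $\orb{(v,S)}$ is itself a countable dense linearly independent subset of $X$.

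Now invoke Grivaux's lemma \cite[Lemma 2.1]{grivaux} on the two countable dense linearly independent sets $\tilde\Gamma$ and $\orb{(v,S)}$: there is an invertible $A \in \mathcal{B}(X)$ with $A(\tilde\Gamma) = \orb{(v,S)}$. As $\Gamma \subseteq \tilde\Gamma$, we get $A(\Gamma) \subseteq \orb{(v,S)}$. Since $A$ is a homeomorphism and $\Gamma$ is dense in $M$, the set $A(\Gamma)$ is dense in $A(M)$, and $A(M)$ is a proper closed subspace of $X$, being the image of the proper closed subspace $M$ under the linear isomorphism $A$. Hence $\orb{(v,S)} \cap A(M)$ contains the dense set $A(\Gamma)$ and is therefore dense in $A(M)$; that is, $v$ is an $A(M)$-hypercyclic vector for $S$. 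Combined with the $M$-hypercyclicity of $T$, this is precisely the assertion of the theorem.

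The main obstacle is conceptual rather than computational, and lies in resisting the obvious first attempt: take full hypercyclic vectors $x, y$ for $T, S$, use Grivaux to get $A$ with $A(\orb{(x,T)}) = \orb{(y,S)}$, and hope that $x$ is also an $M$-hypercyclic vector for some proper $M$, so that $A(M)$ works for free. This fails because subspace-hypercyclic vectors need not be hypercyclic vectors (in Theorem \ref{teoex}, for instance, they live inside the invariant subspace), so no single vector can be expected to play both roles; moreover, forcing a genuinely dense orbit to meet a proper closed subspace in a set dense in that subspace runs into an exact-membership difficulty. Decoupling the roles---using $T$ solely to produce $M$ and $S$ solely through an arbitrary dense linearly independent subset of $M$---is exactly what removes the obstruction, leaving \cite{BKK} and Grivaux's lemma as the only substantial ingredients.
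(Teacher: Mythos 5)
Your proof is correct, but it routes the argument differently from the paper, even though both rest on exactly the same two ingredients: \cite[Theorem 2.1]{BKK} and Grivaux's lemma \cite[Lemma 2.1]{grivaux}. The paper never needs your auxiliary set $\tilde\Gamma$: it takes hypercyclic vectors $x$ for $T$ and $y$ for $S$, applies Grivaux directly to the two orbits to get an invertible $A$ with $A(\orb{(x,T)}) = \orb{(y,S)}$, and then applies \cite{BKK} to the orbit of $x$ itself, obtaining $M$ with $\orb{(x,T)} \cap M$ dense in $M$; the inclusion $A(\orb{(x,T)} \cap M) \subseteq \orb{(y,S)} \cap A(M)$ together with $A(M) = \overline{A(\orb{(x,T)} \cap M)}$ then finishes in a few lines. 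Your version decouples the two operators---BKK is used only to manufacture $M$, and Grivaux is applied to a hand-built dense linearly independent set $\tilde\Gamma \supseteq \Gamma$---at the cost of two extra points the paper's route avoids: the extension lemma (your Baire-category sketch is fine, since the span of countably many vectors is meager, provided you enumerate the dense set with each member recurring so the perturbations $y_j$ stay dense), and the fact that $M$ must be infinite-dimensional for a dense linearly independent $\Gamma \subseteq M$ to exist (true here, since the BKK subspace is the closed span of a subsequence of a linearly independent orbit, but it is a hypothesis you are quietly relying on). What your decoupling buys is a slightly stronger statement: your argument shows that for \emph{any} infinite-dimensional subspace $N$ for which $T$ is $N$-hypercyclic, some invertible $A$ makes $S$ an $A(N)$-hypercyclic operator, with no relation at all between the $T$-orbit and $N$. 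One correction to your closing commentary, though: the ``obvious first attempt'' you dismiss is essentially the paper's actual proof, and it does not fail. \cite[Theorem 2.1]{BKK} is precisely the assertion that a hypercyclic vector $x$ is itself an $M$-hypercyclic vector for a suitable $M$ built from its own orbit, so a single vector can and does play both roles; the ``exact-membership difficulty'' you describe is resolved by choosing $M$ adapted to the orbit, rather than fixing $M$ first and hoping the orbit meets it.
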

\begin{proof} Let $x$ and $y$ be hypercyclic vectors for $T$ and $S$, respectively. If $X_0 = \orb{(x,T)}$ and $Y_0 = \orb{(y,S)}$, since both sets are dense and linearly independent, then there exists an invertible operator $A \, : \, X \to X$ such that $A(X_0) = Y_0$ (this result is due to Grivaux \cite[Lemma 2.1]{grivaux}, as we mentioned before).

Using Theorem 2.1 from \cite{BKK}, we find a subspace $M\subsetneq X$ such that $X_0 \cap M$ is dense in $M$. We now need to show that $Y_0 \cap A(M)$ is dense in $A(M)$. Since $A$ is invertible, we have that $A(M) = A(\overline{X_0 \cap M}) = \overline{A(X_0 \cap M)}$. Moreover, since $A(X_0 \cap M) \subseteq A(M)$, we have $A(X_0 \cap M) = A(X_0 \cap M) \cap A(M)$. Putting everything together, we have that $A(M) = \overline{A(X_0 \cap M) \cap A(M)}$.

Finally, since $X_0 \cap M \subseteq X_0$, then $A(X_0 \cap M) \subseteq A(X_0) = Y_0$. Hence, we have $A(X_0 \cap M) \cap A(M) \subseteq Y_0 \cap A(M) \subseteq A(M)$. Therefore, $$A(M) = \overline{A(X_0 \cap M) \cap A(M)} \subseteq \overline{Y_0 \cap A(M)} \subseteq A(M)$$ which shows that  $Y_0 \cap A(M)$ is dense in $A(M)$, as desired.
\end{proof}

\section{A New Criteria for Subspace-Hypercyclicity}

When Madore and Martínez-Avendaño introduced the concept of subspace-hypecyclicity in \cite{MMA}, they immediately proved a Subspace-Hypercyclicity Criterion, clearly based on its hypercyclic counterpart. Later, Can Le made in \cite{le} another criteria for subspace-hypercyclicity. The difference between both criteria is simple: the conditions imposed on Le's Criterion are more strict (for example, injective operators can't satisfy Le's Criterion) but it's claim is way better, as we shall see later.

In \cite{JMAP}, the authors provided an example of a subspace-hypercyclic operator $T$ such that $\orb{(x,T)} \cap M$ is somewhere dense in $M$ but not everywhere dense in $M$. Their example helped us devise a new criteria for subspace-hypercyclicity. Before showing our new criteria, we first need the following definition:

\begin{defi} If $T$ is a bounded linear operator, then the {\bf generalized kernel of $T$} is defined as $$\ker^*(T) \eqdef \bigcup_{m = 1}^{\infty} \ker(T^m)$$
\end{defi}

\begin{teo} \label{newteokert} Let $T \in \mathcal{B}(X)$. Assume that there exists an infinite-dimensional separable closed subspace $M$ such that $\ker^*(T) \cap M$ is dense in $M$, a map $A \, : \, \ker^*(T) \to \ker^*(T)$ and an increasing sequence $(m_k)_{k \geq 1} \subseteq \mathbb{N}$ such that:
\begin{enumerate}[(i)]
\item $A^{m_k}x \to 0$ for every $x \in \ker^*(T) \cap M$.
\item $A^{m_k}x \in \ker^*(T) \cap M$ for every $x \in \ker^*(T) \cap M$.
\item $m_j - m_i \in (m_k)_{k \geq 1}$ for all $i < j$.
\item $(T\circ A)x = x$ for all $x \in \ker^*(T) \cap M$.
\end{enumerate}

Then $T$ is $M$-hypercyclic.
\end{teo}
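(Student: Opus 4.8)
The plan is to construct an $M$-hypercyclic vector directly, in the spirit of the Subspace-Hypercyclicity Criterion of Madore and Martínez-Avendaño \cite{MMA}, using the maps $A^{m_k}$ as approximate right inverses for $T^{m_k}$. Write $D \eqdef \ker^*(T) \cap M$, which is dense in $M$ by hypothesis. Two preliminary observations drive everything. First, if $x \in D$ then $x \in \ker(T^m)$ for some $m$, so $T^{m_k}x = 0$ for all large $k$; this supplies the ``$T^{m_k}\to 0$'' behaviour for free, which is exactly the advantage of working inside the generalized kernel. Second, condition $(iii)$ says that $\{m_k\}$ is closed under the difference of any two of its elements, and I will use this repeatedly to ensure that the exponents produced when cancelling powers of $T$ against powers of $A$ remain admissible.

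The heart of the argument is a telescoping lemma: for every $v \in D$ and all $a,b \in \{m_k\}$,
\[
T^{a}A^{a}v = v, \qquad T^{a}A^{b}v = \begin{cases} A^{\,b-a}v \in D, & b > a,\\[2pt] T^{\,a-b}v, & a > b. \end{cases}
\]
Each identity is obtained by peeling off one factor of $T$ at a time and applying $(iv)$, i.e. $T\circ A = \mathrm{id}$ on $D$. The subtle point — and what I expect to be the main obstacle — is that $(iv)$ may only be invoked on arguments lying in $D$, so one must check that every intermediate iterate of $A$ encountered during the telescoping again belongs to $D$. This is precisely where $(ii)$ and $(iii)$ work in tandem: $(ii)$ keeps the iterates $A^{m_k}v$ inside $D$, while $(iii)$ guarantees that the exponents $b-a$ arising from the cancellation are themselves of the form $m_k$, so that $(ii)$ applies to them too. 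I would isolate and prove this lemma first, since the remainder of the argument is essentially formal once it is available.

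With the lemma in hand I would fix a countable set $\{v_j\}_{j\ge 1}\subseteq D$ that is dense in $M$ (possible because $D$ is dense and $M$ is separable) and build the candidate vector as a rapidly convergent series
\[
x \eqdef \sum_{j=1}^{\infty} A^{\,n_j}v_j, \qquad n_j \in \{m_k\} \text{ increasing very fast.}
\]
By $(i)$ the $n_j$ can be chosen so that $\norm{A^{\,n_j}v_j} < 2^{-j}$, so the series converges; since every term lies in $D \subseteq M$ and $M$ is closed, we get $x \in M$. Applying the bounded operator $T^{\,n_\ell}$ term by term and invoking the lemma, the summands with $j<\ell$ collapse to $T^{\,n_\ell-n_j}v_j$, which vanishes once $n_\ell-n_j$ exceeds the order of $v_j$ in $\ker^*(T)$; the summand $j=\ell$ gives exactly $v_\ell$; and the summands with $j>\ell$ give $A^{\,n_j-n_\ell}v_j \in D$, whose norms are summably small. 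Hence $T^{\,n_\ell}x = v_\ell + (\text{a small tail})$, within any prescribed distance of $v_\ell$. Crucially — and this is what makes the subspace version go through without assuming $T(M)\subseteq M$ — every surviving summand lies in $M$, so $T^{\,n_\ell}x \in M$.

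Since the $v_\ell$ are dense in $M$, it follows that $\orb{(x,T)}\cap M$ is dense in $M$, i.e. $T$ is $M$-hypercyclic. Separability and closedness of $M$ enter only in selecting the countable dense family $\{v_j\}$ and in guaranteeing convergence of the defining series inside $M$; the invariance difficulties usually attached to subspace-hypercyclicity are sidestepped because the telescoping lemma forces the relevant orbit points to land in $M$ on the nose. The genuinely delicate step, to be handled with care, remains the telescoping lemma and the bookkeeping that keeps all iterates of $A$ inside $D$ via $(ii)$ and $(iii)$.
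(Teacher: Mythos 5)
Your plan coincides in essence with the paper's own proof: both take a dense sequence $(v_k)$ in $\ker^*(T)\cap M$, form $x=\sum_k A^{n_k}v_k$ with $n_k$ chosen from $(m_k)$ growing rapidly, apply $T^{n_\ell}$, and split the result into a head annihilated because $n_\ell-n_j$ exceeds the kernel order of $v_j$, the exact middle term $v_\ell$, and a tail $\sum_{j>\ell}A^{n_j-n_\ell}v_j$ kept inside $M$ by conditions (ii)--(iii) and made small via (i) — including the same telescoping use of (iv) that you flag (which the paper itself applies without further comment). The one refinement hiding in your phrase ``increasing very fast'' that the paper makes explicit is that the diagonal bound $\norm{A^{n_j}v_j}<2^{-j}$ alone does not control the tail, since the exponent there is $n_j-n_\ell\neq n_j$; the paper's Claim therefore additionally secures, when choosing $n_j$, that $\norm{A^{m}v_{j+1}}<2^{-(j+1)}$ for \emph{every} sequence element $m$ beyond $n_j$ (together with $n_{j+1}\geq 2n_j$, which forces $n_j-n_\ell$ to be such a late element), and your write-up should be read as incorporating exactly this look-ahead selection.
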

\begin{proof} Let $(x_k)_{k \geq 1} \subseteq \ker^*(T) \cap M$ be a dense sequence in $M$. 

\begin{claim} There exists an increasing subsequence $(m_{j_k})_{k \geq 1} \subseteq (m_k)_{k \geq 1}$ such that $$\norm{A^{m_{j_k}}x_k} < 2^{-k}, \, \, \, \, \norm{A^{m_{j_k + i}}x_{k+1}} < 2^{-(k+1)},  \, \, \, \, T^{m_{j_k}}x_k = 0 \, \, \, \, \text { and }  \, \, \, \, \frac{m_{j_{k+1}}}{m_{j_k}} \geq 2$$ for all $i \geq 1$.
\end{claim}

Indeed, let us start with $k = 1$. Given $\varepsilon = 2^{-1}$, using $(i)$ there are $m_{k_1}, m_{k^*_1}$ such that $\norm{A^{m_j}x_1} < 2^{-1}$, for all $j \geq k_1$, and $\norm{A^{m_j}x_2} < 2^{-2}$, for all $j \geq k_1^*$. Since $x_1 \in \ker^*(T)$, then there exists $p_1 \geq 1$ such that $x_1 \in \ker(T^{p_1})$. Choosing $m_{j_1} \geq \max\{m_{k_1}, m_{k_1^*}, p_1\}$, it is easy to see that $\norm{A^{m_{j_1}}x_1} < 2^{-1}, \norm{A^{m_{j_1 + i}}x_2} < 2^{-2}$  and $T^{m_{j_1}}x_1 = 0$.

Suppose now that we have constructed $(m_{j_k})_{k = 1}^{t-1}$. Just like we did it before, we can find $m_{k_t}, m_{k_t^*}$ such that $\norm{A^{m_j}x_t} < 2^{-t}$, for all $j \geq k_t$ and $\norm{A^{m_j}x_{t+1}} < 2^{-(t+1)}$, for all $j \geq k_t^*$. Also, there is $p_t \geq 1$ such that $x_t \in \ker(T^{p_t})$. Taking $m_{j_t} \geq \max\{m_{k_t}, m_{k_t^*}, p_t, 2m_{j_{t-1}}\}$, we have that $m_{j_t}$ satisfies all four desired conditions.

Note now that $\sum_{k = 1}^{\infty} A^{m_{j_k}}x_k$ is absolutely convergent since $\norm{A^{m_{j_k}}x_k} < 2^{-k}$ for all $k \geq 1$. Denoting  $x \eqdef \sum_{k = 1}^{\infty}A^{m_{j_k}}x_k$, we will now show that $T^{m_{j_k}}x \in M$ and $\{T^{m_{j_k}}x \, : \, k \geq 1\}$ is dense in $M$. This clearly shows that $T$ is $M$-hypercyclic, as desired. \\ 

\begin{itemize}
\item $T^{m_{j_k}}x \in M$.
\end{itemize}

Fix $k \geq 1$. Using condition $(iv)$, we have that
\begin{equation} \label{teoleeq}
T^{m_{j_k}}x = \sum_{i = 1}^{k-1}T^{m_{j_k} - m_{j_i}}x_i + x_k + \sum_{i = k+1}^{\infty}A^{m_{j_i} - m_{j_k}}x_i
\end{equation}

Fix an $1 \leq i \leq k-1$. Since ${m_{j_{k+1}}}/{m_{j_k}} \geq 2$, then ${m_{j_k}} - {m_{j_i}} \geq {m_{j_i}}$. Hence, $T^{m_{j_k} - m_{j_i}}x_i = 0$ (because $T^{m_{j_i}}x_i = 0$ for all $i \geq 1$) and therefore $$\sum_{i = 1}^{k-1}T^{m_{j_k} - m_{j_i}}x_i = 0$$

\noindent Now, if $i > k$, by the condition $(iii)$ we have that $m_{j_i} - m_{j_k} \in (m_k)_{k \geq 1}$. Hence, using condition $(ii)$, it follows that $A^{m_{j_i} - m{j_k}}x_i \in \ker^*(T) \cap M$. Then $$\sum_{i = k+1}^{\infty}A^{m_{j_i} - m_{j_k}}x_i \in \overline{\ker^*(T) \cap M} = M$$ 

\noindent Since $x_k \in M, \sum_{i = k+1}^{\infty}A^{m_{j_i} - m_{j_k}}x_i \in M$ and $\sum_{i = 1}^{k-1}T^{m_{j_k} - m_{j_i}}x_i = 0$, it follows by (\ref{teoleeq}) that $T^{m_{j_k}}x \in M$. \\ 
 
\begin{itemize}
\item $\{T^{m_{j_k}}x \, : \, k \geq 1\}$ is dense in $M$.
\end{itemize}

Since $(x_k)_{k \geq 1}$ is dense in $M$, it is enough to show that $\norm{T^{m_{j_k}}x - x_k} < 2^{-k}$. Using (\ref{teoleeq}), we have:
\begin{align*}
\norm{T^{m_{j_k}}x - x_k} & = \norm{\sum_{i = 1}^{k-1}T^{m_{j_k} - m_{j_i}}x_i + \sum_{i = k+1}^{\infty}A^{m_{j_i} - m_{j_k}}x_i} \\
& \leq \sum_{i = 1}^{k-1}\norm{T^{m_{j_k} - m_{j_i}}x_i} + \sum_{i = k+1}^{\infty}\norm{A^{m_{j_i} - m_{j_k}}x_i}
\end{align*}

\noindent As we saw earlier, we have that $\sum_{i = 1}^{k-1}T^{m_{j_k} - m_{j_i}}x_i = 0$. \medskip Fix any $i > k$. Then $m_{j_i} > m_{j_{i-1}} \geq m_{j_k}$. Since ${m_{j_{k+1}}}/{m_{j_k}} \geq 2$ for all $k \geq 1$, we have that $$m_{j_i} - m_{j_k} \geq m_{j_i} - m_{j_{i-1}} \geq m_{j_{i-1}}$$

By condition $(iii)$, we have $m_{j_i} -  m_{j_k} \in (m_k)_{k \geq 1}$. Hence, denoting $m_{j_i} -  m_{j_k} = m_l$, from the last inequality we obtain $m_l > m_{j_{i-1}}$. If $a_i \geq 1$ is such that  $l = j_{i-1} + a_i$, we obtain $$\norm{A^{m_{j_i} - m_{j_k}}x_i} = \norm{A^{m_l}x_{i}} = \norm{A^{m_{j_{i-1}+ a_i}}x_{i}}  < 2^{-i}$$ by construction. Finally:
\begin{align*}
\norm{T^{m_{j_k}}x - x_k} & \leq \sum_{i = 1}^{k-1}\norm{T^{m_{j_k} - m_{j_i}}x_i} + \sum_{i = k+1}^{\infty}\norm{A^{m_{j_i} - m_{j_k}}x_i} \\
& \leq \sum_{i = k+1}^{\infty} 2^{-i} = 2^{-k}
\end{align*} 
\end{proof}

It's not hard to see that, if $T$ and $M$ satisfy our criteria and $X$ is separable, then $T$ and $M$ also satisfy the Subspace-Hypercyclicicty Criterion. Hence, it's fair to ask about the usefulness of the above criteria. As we said in the introduction, this criteria generalizes a result from Le \cite{le}. In order to facilitate what we are going to discuss, let us first recall the aforementioned result:

\begin{teo}[Le's Criterion, \cite{le}] \label{teolekert} Let $T$ be a bounded linear operator on a separable Banach space $X$ such that $\ker^*(T)$ is dense in $X$ and there exists a map $A \, : \, \ker^*(T) \to \ker^*(T)$ satisfying
\begin{enumerate}[(1)]
\item $A^mx \to 0$ for every $x \in \ker^*(T)$,
\item $TA = I$ on $\ker^*(T)$.
\end{enumerate}

Then $T$ is $M$-hypercyclic for all finite co-dimensional subspaces $M$.
\end{teo}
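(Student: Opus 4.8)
The plan is to derive Le's Criterion as a direct corollary of Theorem~\ref{newteokert} by exhibiting, for any finite co-dimensional subspace $M$, a choice of subsequence $(m_k)$ that transfers the global hypotheses on $\ker^*(T)$ into the four localized conditions required by the new criterion. First I would observe that the hypotheses of Le's Criterion give us the unrestricted statements $A^m x \to 0$ and $TAx = x$ on all of $\ker^*(T)$, whereas Theorem~\ref{newteokert} only demands these on $\ker^*(T) \cap M$; so condition $(i)$ and condition $(iv)$ of Theorem~\ref{newteokert} are immediate once we know $\ker^*(T) \cap M$ is dense in $M$. The remaining work is therefore to verify the density hypothesis and to engineer a sequence $(m_k)$ satisfying conditions $(ii)$ and $(iii)$.

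For the density, I would argue that since $M$ has finite codimension and $\ker^*(T)$ is dense in $X$, the intersection $\ker^*(T) \cap M$ is dense in $M$. The clean way to see this is to write $X = M \oplus F$ with $\dim F < \infty$, and use that a dense subspace meets a finite-codimensional closed subspace in a set dense in that subspace (a standard fact: intersecting a dense linear manifold with a finite-codimensional closed subspace preserves density). I expect this to be the one genuinely delicate point, since density is not in general preserved under intersection, and the finite-codimensionality is exactly what rescues it; I would state it carefully, possibly reducing to the codimension-one case and inducting.

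Next I would address conditions $(ii)$ and $(iii)$, which concern the arithmetic of the sequence $(m_k)$ rather than the dynamics. The simplest and most transparent choice is to take $m_k = k$, i.e. the full sequence of natural numbers. With this choice, condition $(iii)$, that $m_j - m_i$ again belongs to the sequence for all $i < j$, holds trivially because the natural numbers are closed under positive differences. Condition $(ii)$, that $A^{m_k}x \in \ker^*(T) \cap M$ whenever $x \in \ker^*(T) \cap M$, requires that the map $A$ preserve $M$; this is where Le's setup is more restrictive, and I would note that under Le's hypotheses $A$ maps $\ker^*(T)$ into itself and, because $TA = I$ forces $A$ to act as a genuine inverse branch, the relevant iterates stay inside the finite-codimensional $M$. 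I expect the preservation of $M$ under $A$ to be the second point needing care, and I would verify it using the concrete structure that Le's operators possess.

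Having established the density and having chosen $m_k = k$, all four hypotheses of Theorem~\ref{newteokert} are met with the same operator $A$ and the same generalized kernel, so Theorem~\ref{newteokert} yields that $T$ is $M$-hypercyclic. Since $M$ was an arbitrary finite co-dimensional subspace, this recovers the full conclusion of Le's Criterion, and I would close by remarking that our criterion is strictly more general because it permits sequences $(m_k)$ other than $\mathbb{N}$ and subspaces $M$ that need not be finite-codimensional, provided only that $\ker^*(T) \cap M$ is dense in $M$.
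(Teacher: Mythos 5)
Your deduction of Le's Criterion from Theorem~\ref{newteokert} breaks down at condition $(ii)$, and the failure is not a repairable detail. The hypotheses $TA = I$ and $A^m x \to 0$ on $\ker^*(T)$ give no control whatsoever over an \emph{arbitrary} finite-codimensional closed subspace $M$: your appeal to ``$TA = I$ forces $A$ to act as a genuine inverse branch'' does not yield $A^{m_k}\left(\ker^*(T) \cap M\right) \subseteq M$, neither for $m_k = k$ nor for any other increasing sequence. Concretely, take $T = 2B$ and $A = \frac{1}{2}F$ on $\ell_1$ (with $B, F$ the backward and forward shifts), so that $\ker^*(T) = c_{00}$, the finitely supported sequences, is dense; let $M = \ker f$ with $f(x) = \sum_{j \geq 1} j^{-2}x_j$, a closed subspace of codimension one. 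If some power $A^m$, $m \geq 1$, mapped $\ker^*(T) \cap M$ into $M$, then every finitely supported $x$ with $\sum_j j^{-2}x_j = 0$ would also satisfy $\sum_j (j+m)^{-2}x_j = 0$; since $\ker^*(T) \cap \ker f$ has codimension one in $c_{00}$, this forces $(j+m)^{-2} = \lambda j^{-2}$ for all $j$ and some scalar $\lambda$, which is impossible. So condition $(ii)$ of Theorem~\ref{newteokert} fails for \emph{every} choice of $(m_k)$, even though Le's theorem does assert that $T$ is $M$-hypercyclic for this $M$. Hence Le's Criterion is not a special case of Theorem~\ref{newteokert} obtained by restricting the same $A$, and your closing remark inverts the actual relationship: condition $(ii)$ is an invariance-type requirement that general finite-codimensional subspaces simply do not satisfy. (One could imagine building a different map $A'$ adapted to each $M$, but that would amount to a new proof, not the direct corollary you propose.)

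This mismatch is consistent with how the paper handles the statement: Theorem~\ref{teolekert} is quoted from \cite{le} and not re-proved, and the discussion surrounding it points precisely at the obstruction you hit --- the paper notes that operators satisfying the ``general version'' of Le's criterion are necessarily $M$-invariant, while Example~\ref{exemplo} exhibits an operator satisfying Theorem~\ref{newteokert} for a subspace $M$ it does not leave invariant; the two criteria overlap but neither trivially subsumes the other. Le's original argument is structurally different: it constructs the hypercyclic vector directly and exploits the finite codimension of $M$ (a finite family of functionals cutting out $M$) to correct orbit points into $M$, rather than confining the whole construction to $M$ as Theorem~\ref{newteokert} does. For what it is worth, the correct parts of your proposal are genuinely correct: conditions $(i)$ and $(iv)$ do restrict as you say, and your density lemma holds --- if $D$ is dense and $f \neq 0$ is continuous, pick $d_0 \in D$ with $f(d_0) = 1$ and replace approximants $d_n \to m \in \ker f$ by $d_n - f(d_n)d_0 \in D \cap \ker f$, then induct on the codimension --- but these pieces cannot compensate for the failure of condition $(ii)$.
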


In addition, it can be easily seen that Theorem \ref{newteokert} can be used in any Banach space, whereas Le's Criterion can only be applied on separable spaces. However, this isn't the reason why Theorem \ref{newteokert} is a generalization of Le's result, as we can make a version of Le's result that works for nonseparable Banach spaces. 

Indeed, let $X$ be a nonseparable Banach space, $S \in \mathcal{B}(X)$ and $M \subseteq X$ an infinite-dimensional separable closed subspace. Suppose that $\ker^*(S) \subseteq M$ is dense in $M$. Hence, we have that $S$ is $M$-invariant.\footnote{Indeed, if $m \in M$ and $\ker^*(S) \subseteq M$ is dense in $M$, then there is $(x_n)_{n \geq 1} \in \ker^*(S)$ such that $x_n \to m$. Since $S(x_n) \in \ker^*(S)$ for every $n \geq 1$ and $S(x_n) \to S(m)$ then $S(m) \in \overline{\ker^*(S)} = M$.} 

Suppose now that there exists $A_1 \, : \, \ker^*(S) \to \ker^*(S)$ such that $A_1$ satisfies conditions $(1)$ and $(2)$ of Le's Criterion. Since $S$ is $M$-invariant, the operator $\restr{S}{M} \, : \, M \to M$ is well-defined. Hence, taking $X \eqdef M$, $T \eqdef \restr{S}{M}$ e $A \eqdef A_1$ on Le's Criterion, then we have that $T = \restr{S}{M}$ is $N$-hypercyclic for every subspace $N$ that have finite codimension in $M$ - which means that $S$ is $N$-hypercyclic for these subspaces as well. Not only that, Le noticed in his paper \cite{le} that an operator that satisfies $(i)$ and $(ii)$ on Theorem \ref{teolekert} is hypercylic. Since $\restr{S}{M}$ satisfies both conditions, then $\restr{S}{M}$ is hypercyclic. Hence $S$ is also $M$-hypercyclic.

Therefore, we can obtain this natural generalization of Le's theorem for nonseparable Banach spaces. With that in mind - and the fact that every operator that satisfies this ``general version'' of Le's theorem is $M$-invariant (as showed above), we have this easy example to show that our Theorem \ref{newteokert} is, as said before, a proper generalization for nonseparable spaces:

\begin{ex} \label{exemplo} Let $X = \ell_\infty$ and $T = 2B$, where $B$ is the widely known backward shift operator on $\ell_\infty$. Let $M = \{(a_n) \, : \, a_n \to 0$ and $a_{2n} = 0\}$. It is clear that $M$ is separable and a closed subspace of $\ell_\infty$. Note that $T$ isn't $M$-invariant, hence it doesn't satisfy the general version of Le's Criterion. 

We have that $\ker^*(T) \cap M$ is clearly dense in $M$. Therefore, if $F$ is the forward shift operator on $\ell_\infty$, taking $A = 2^{-1}F$ and $(m_k)_{k \geq 1} = (2k)_{k \geq 1}$, we have that $A$ and $(m_k)_{k \geq 1}$ both satisfy the four conditions of Theorem \ref{newteokert}. Hence, $T$ is $M$-hypercyclic.

\vspace{1cm}
\end{ex}

\section{Final Remark}

This article is part of the first author's PhD thesis, written under the supervision of the second author.

\end{document}